\newtheorem{theorem}{Theorem}[section]
\newtheorem{lemma}[theorem]{Lemma}
\theoremstyle{definition}
\newtheorem{definition}[theorem]{Definition}
\numberwithin{equation}{section}
\DeclareMathOperator{\diam}{diam} 
\newcommand{\be}{\begin{equation}}
\newcommand{\ee}{\end{equation}}
\newcommand{\dist}{{\operatorname{dist}}}
\DeclareMathOperator{\rad}{rad}
\DeclareMathOperator{\capacity}{cap}
\def\Xint#1{\mathchoice 
 {\XXint\displaystyle\textstyle{#1}}%
{\XXint\textstyle\scriptstyle{#1}}%
{\XXint\scriptstyle\scriptscriptstyle{#1}}%
 {\XXint\scriptscriptstyle\scriptscriptstyle{#1}}%
 \!\int}
\def\XXint#1#2#3{{\setbox0=\hbox{$#1{#2#3}{\int}$}
 \vcenter{\hbox{$#2#3$}}\kern-.5\wd0}}
 \def\dashint{\Xint-}
\begin{document}
\title{Capacities and Hausdorff measures on metric spaces}
\author{Nijjwal Karak}
\address{Department of Mathematics and Statistics, University of Jyv\"askyl\"a, P.O. Box 35, FI-40014, Jyv\"askyl\"a, Finland}
\email{nijjwal.n.karak@jyu.fi}
\author{Pekka Koskela}
\address{Department of Mathematics and Statistics, University of Jyv\"askyl\"a, P.O. Box 35, FI-40014, Jyv\"askyl\"a, Finland}
\email{pekka.j.koskela@jyu.fi}
\thanks{The authors were partially supported by the Academy of Finland grant number 131477}
\begin{abstract}
In this article, we show that in a $Q$-doubling space $(X,d,\mu),$ $Q>1,$ that supports a $Q$-Poincar\'e inequality and satisfies a chain condition, sets of $Q$-capacity zero have generalized Hausdorff $h$-measure zero for $h(t)=\log^{1-Q-\epsilon}(1/t).$
\end{abstract}
\maketitle
\indent Keywords: Capacity, generalized Hausdorff measure, Poincar\'e inequality.\\
\indent 2010 Mathematics Subject Classification: 31C15, 28A78.
\section{Introduction}
The relation between capacities and generalized Hausdorff measures in $\mathbb{R}^n$ and in metric spaces has been studied for many years. In $\mathbb{R}^n,$ it is known that sets of $p$-capacity zero have generalized Hausdorff $h$-measure zero provided that
\begin{equation}\label{relation}
\int_0^1\left(t^{p-n}h(t)\right)^{\frac{1}{p-1}}\,\frac{dt}{t}<\infty,
\end{equation}
for $1<p\leq n,$ see Theorem 7.1 in \cite{KM72} or Theorem 5.1.13 in \cite{AH96}. In particular, the Hausdorff dimension of such sets does not exceed $n-p.$ Similar results for weighted capacities and Hausdorff measures in $\mathbb{R}^n$ can be found e.g. in \cite{HKM06}.\\
\indent Let us consider a doubling metric space $(X,d,\mu).$ Then a simple iteration argument shows that there is an exponent $Q>0$ and a constant $C\geq 1$ so that
\begin{equation}\label{1}
\left(\frac{s}{r}\right)^Q\leq C\frac{\mu(B(x,s))}{\mu(B(a,r))}
\end{equation}
holds whenever $a\in X$, $x\in B(a,r)$ and $0<s\leq r.$ We say that $(X,d,\mu)$ is $Q$-\textit{doubling} if $(X,d,\mu)$ is a doubling metric measure space and \eqref{1} holds with the given $Q.$ Towards defining our Sobolev space, we recall that a measurable function $g\ge 0$ is an upper gradient of a measurable function $u$ provided
\begin{equation}
\vert u(\gamma(a))-u(\gamma(b))\vert\le\int_{\gamma}g\, ds
\end{equation}
for every rectifiable curve $\gamma : [a,b]\rightarrow X$ \cite{HK98}, \cite{KM98}. We define $W^{1,p}(X),$ $1\le p<\infty,$ to be the collection of all $u\in L^p(X)$ that have an upper gradient that also belongs to $L^p(X),$ see \cite{Sha00}. In order to obtain lower bounds for the capacity associated to  $W^{1,p}(X),$ it suffices to assume a suitable Poincar\'e inequality. We say that $(X,d,\mu)$ supports a $p$-Poincar\'e inequality if there exist constants $C$ and $\lambda$ such that
\begin{equation}\label{PI}
\dashint_B\vert u-u_B\vert\, d\mu\le C\diam(B)\left(\dashint_{\lambda B}g^p\, d\mu\right)^{1/p}
\end{equation}
for every open ball $B$ in $X$, for every function $u : X\rightarrow\mathbb{R}$ that is integrable on balls, and for every upper gradient $g$ of $u$ in $X.$ For simplicity, we will from now on only consider the case of a $Q$-doubling space and we will assume that $p=Q.$\\
\indent In this paper, we study the relation between $Q$-capacity and generalized Hausdorff $h$-measure for $h(t)=\log^{1-Q-\epsilon}(1/t)$ (see Section 2 for the definitions of capacity and Hausdorff $h$-measure) on a $Q$-doubling metric measure space that supports a $Q$-Poincar\'e inequality. Bj\"orn and Onninen proved in \cite{BO05} that a compact set $K$ in a $Q$-doubling space that supports a $1$-Poincar\'e inequality has Hausdorff $h$-measure zero provided that $Q$-capacity of $K$ is zero, for any $h$ that satisfies \eqref{relation} with $n$ replaced by $Q.$ Hence this holds for $h(t)=\log^{1-Q-\epsilon}(1/t)$ for any $\epsilon>0.$ Under the weaker assumption of a $Q$-Poincar\'e inequality, their work shows that $K$ has Hausdorff $h$-measure zero, for $h(t)=\log^{-Q-\epsilon}(1/t).$ They pose an open problem that in our setting asks if the above analogue of \eqref{relation} is sufficient for $h$ even under a $Q$-Poincar\'e inequality assumption. An examination of the corresponding proof in \cite{BO05} shows that it actually suffices that the Poincar\'e inequality \eqref{PI} holds for each $u\in W^{1,Q}(X)$ with $p=1$ for some function $g\in L^Q(X),$ whose $Q$-norm is at most a fixed constant times the infimum of $Q$-norms of all upper gradients of $u.$ This requirement holds for complete $Q$-doubling spaces that supports a $Q$-Poincar\'e inequality by the self-improving property of Poincar\'e inequalities \cite{KZ08}, for details see Section 4 of \cite{KK}. However, the self-improving property from \cite{KZ08} may fail in the non-complete setting, see \cite{Kos99}.\\
\indent We establish the optimal result for logarithmic gauge functions $h$ under a mild additional assumption.
\begin{theorem}\label{maintheorem}
Let $\epsilon>0.$ Let $(X,\mu)$ be a $Q$-doubling space for some $Q>1$ that supports a $Q$-Poincar\'e inequality and assume that $X$ satisfies a chain condition (see definition \ref{chain}). Let $x_0\in X$ and $R>0.$ Then we have $H^h(E)=0$ for every compact $E\subset B(x_0,R)$ with $\capacity_Q(E,B(x_0,2R))=0$, where $h(t)=\log^{1-Q-\epsilon}(1/t).$
\end{theorem}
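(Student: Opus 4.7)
The plan is to combine a telescoping Poincaré estimate along a chain of balls with a weighted Hölder inequality to obtain a pointwise lower bound at every $x\in E$, and then to extract a Vitali-type cover of $E$ whose total $h$-weight is controlled by the $Q$-energy of an admissible function.

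Since $\capacity_Q(E,B(x_0,2R)) = 0$, for each $n$ I take a $Q$-quasicontinuous admissible Newtonian function $u_n$ with $u_n\ge 1$ quasi-everywhere on $E$, supported in $B(x_0,2R)$, and having an upper gradient $g_n$ with $\int g_n^Q\,d\mu < 2^{-n}$. By the chain condition, for each $x \in E$ there is a chain $B_0 \supset B_1 \supset \cdots$ of balls containing $x$, with radii $r_i \asymp \sigma^{-i}R$, all contained in $B(x_0,2R)$, adjacent balls comparable in measure, and compatible with the $Q$-Poincaré inequality. Telescoping through the chain yields
\[
|u_n(x) - u_{n,B_0}| \le C\sum_{i=0}^\infty r_i\left(\dashint_{\lambda B_i} g_n^Q\,d\mu\right)^{1/Q}
\]
for q.e.\ $x\in E$. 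Choosing $B_0 = B(x_0,2R)$ and estimating $u_{n,B_0}$ from the $L^Q$-norm of $u_n$ gives $|u_{n,B_0}|\le 1/2$ for $n$ large, so the pointwise lower bound $\sum_i r_i(\dashint_{\lambda B_i}g_n^Q)^{1/Q}\ge c$ holds for q.e.\ $x\in E$.

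Next, fix $\epsilon' \in (0,\epsilon)$ and apply Hölder with summable weights $\phi_i = (i+1)^{-(1+\epsilon'/(Q-1))}$. Using the $Q$-doubling bound $r_i^Q/\mu(\lambda B_i) \le C$ from \eqref{1}, I obtain
\[
1 \le C_{\epsilon'}\sum_{i=0}^\infty (i+1)^{Q-1+\epsilon'}\int_{\lambda B_i} g_n^Q\,d\mu \qquad \text{for q.e. } x\in E.
\]
Since $(i+1)^{Q-1+\epsilon'}\sim 1/h(r_i)$ up to the slack $\epsilon - \epsilon'$, this is a Wolff-type potential lower bound for the ``capacitary weight'' of $g_n^Q$ on $E$.

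It remains to convert this pointwise bound into a Hausdorff-$h$ estimate for $E$. For each $x \in E$, a dyadic/pigeonhole decomposition of the sum above, exploiting the slack $\epsilon - \epsilon' > 0$ to make a tail series summable, produces a critical scale $\rho(x) = r_{i(x)}$ such that
\[
h(\rho(x)) \le C\int_{\lambda B(x,c\rho(x))} g_n^Q\,d\mu.
\]
Applying the $5r$-covering lemma to $\{B(x, \rho(x))\}_{x\in E}$ yields a disjoint subfamily whose fivefold dilations cover $E$; the $Q$-doubling property then controls the overlap of the enlarged balls $\lambda B(x_j, c\rho(x_j))$, and
\[
H^h(E) \le C\sum_j h(\rho(x_j)) \le C\sum_j \int_{\lambda B(x_j,c\rho(x_j))}g_n^Q\,d\mu \le C\int g_n^Q\,d\mu < C\,2^{-n} \to 0.
\]
I expect the main obstacle to be the pigeonhole selection of $\rho(x)$ with the right balance between the log-weight $(i+1)^{Q-1+\epsilon'}$ and the local energy $\int_{\lambda B_i} g_n^Q\,d\mu$, together with controlling the overlap of the enlarged covering balls across different scales; the strict inequality $\epsilon' < \epsilon$ in the weighted Hölder step is precisely what provides the summable tail needed to select such a scale and close the covering sum.
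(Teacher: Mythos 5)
Your overall skeleton --- telescoping the Poincar\'e inequality along a chain, a weighted H\"older step with polynomial weights $(i+1)^{Q-1+\epsilon'}$, and a $5B$-covering argument at the end --- is the same as the paper's. The genuine gap is exactly where you flag it: the pigeonhole selection of $\rho(x)$. From
\[
1\le C\sum_{i\ge 0}(i+1)^{Q-1+\epsilon'}\int_{\lambda B_i}g_n^Q\,d\mu
\]
you want an index $i$ with $\int_{\lambda B_i}g_n^Q\,d\mu\ge c\,h(r_i)\approx c\,(i+1)^{1-Q-\epsilon}$. Negating this termwise gives
\[
\sum_{i\ge 0}(i+1)^{Q-1+\epsilon'}\int_{\lambda B_i}g_n^Q\,d\mu < c\sum_{i\ge 0}(i+1)^{\epsilon'-\epsilon},
\]
and the right-hand series diverges unless $\epsilon-\epsilon'>1$; so for $0<\epsilon\le 1$ (the interesting range) no choice of $\epsilon'\in(0,\epsilon)$ produces a contradiction. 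What a termwise pigeonhole \emph{does} yield is a scale with $\int_{\lambda B_i}g_n^Q\gtrsim (i+1)^{-Q-\epsilon}$, i.e.\ the weaker gauge $\log^{-Q-\epsilon}(1/t)$ --- precisely the Bj\"orn--Onninen bound that the theorem is meant to improve by one power of the logarithm.

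The paper's fix is to negate a condition on \emph{all} radii simultaneously rather than scale by scale: it defines $E_{\epsilon,M}$ as the set of $x\in E$ admitting some $r_x$ with $\int_{B(x,r_x)}g^Q\,d\mu\ge M\log^{1-Q-\epsilon}(10/r_x)$, and for $x\notin E_{\epsilon,M}$ one controls the \emph{tails} $\sum_{m\ge n}\int_{\lambda B_m}g^Q\,d\mu\le Mn^{1-Q-\epsilon}$ (using the bounded-overlap property in the chain condition), which is far stronger than a termwise bound. Feeding this into the discrete Hardy-type inequality of Lemma \ref{3},
\[
\sum_{n}\frac{a_n}{\bigl(\sum_{m\ge n}a_m\bigr)^{1-\delta}}\le\frac{1}{\delta}\Bigl(\sum_n a_n\Bigr)^{\delta},
\]
with $a_n=\int_{\lambda B_n}g^Q\,d\mu$ and $\tilde\epsilon$ chosen so that $Q-1+\tilde\epsilon=(Q-1+\epsilon)(1-\delta)$, bounds the weighted sum by $cM/\delta$ and contradicts the telescoping lower bound once $M$ is small enough. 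This summation-by-parts device applied to the tails is the missing idea; without it you lose a full power of the logarithm. A secondary, smaller issue: in your final covering step the enlarged balls $\lambda B(x_j,c\rho(x_j))$ of a disjoint family need not have bounded overlap when the radii vary; the paper sidesteps this by phrasing the conclusion of the exceptional-set argument in terms of $\int_{B(x,r_x)}g^Q\,d\mu$ over the balls themselves, so the disjoint subfamily from Lemma \ref{cover} can be summed directly against $\int_X g^Q\,d\mu$.
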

A doubling space that supports a $p$-Poincar\'e inequality is necessarily connected and even bi-Lipschitz equivalent to a geodesic space, if it is complete \cite{Che99}. Since each geodesic space satisfies a chain condition, the assumption of chain condition in Theorem \ref{maintheorem} is natural.
\section{Notation and preliminaries}
We assume throughout that $X=(X,d,\mu)$ is a metric measure space equipped with a metric $d$ and a Borel regular outer measure $\mu.$ We call such a $\mu$ a measure. The Borel-regularity of the measure $\mu$ means that all Borel sets are $\mu$-measurable and that for every set $A\subset X$ there is a Borel set $D$ such that $A\subset D$ and $\mu(A)=\mu(D).$\\

We denote open balls in $X$ with a fixed center $x\in X$ and radius $0<r<\infty$ by $$B(x,r)=\{y\in X : d(y,x)<r\}.$$
If $B=B(x,r)$ is a ball, with center and radius understood, and $\lambda>0,$ we write
$$\lambda B=B(x,\lambda r).$$
With small abuse of notation we write $\rad(B)$ for the radius of a ball $B$ and we always have $$\diam(B)\leq 2\rad(B),$$
and the inequality can well be strict.\\

A Borel regular measure $\mu$ on a metric space $(X,d)$ is called a \textit{doubling measure} if every ball in $X$ has positive and finite measure and there exist a constant $C\geq 1$ such that
\begin{equation*}
\mu(B(x,2r))\leq C_{\mu}\,\mu(B(x,r))
\end{equation*}
for each $x\in X$ and $r>0.$ We call a triple $(X,d,\mu)$ a \textit{doubling metric measure space} if $\mu$ is a doubling measure on $X.$\\ 

If $A\subset X$ is a $\mu$-measurable set with finite and positive measure, then the \textit{mean value} of a function $u\in L^1(A)$ over $A$ is
$$u_A=\dashint_A u\, d\mu=\frac{1}{\mu(A)}\int_A u\, d\mu.$$
\\
\indent A metric space is said to be \textit{geodesic} if every pair of points in the space can be joined by a curve whose length is equal to the distance between the points.
\begin{definition}
Let $E\subset B(x_0,R)$ be compact. The $Q$-capacity of $E$ with respect to the ball $B(x_0,2R)$ is
\begin{equation*}
\capacity_Q(E, B(x_0,2R))=\inf\Vert g\Vert_{L^Q(X)}
\end{equation*}
where the infimum is taken over all upper gradients $g$ of all continuous functions $u$ with compact support in $B(x_0,2R)$ and $u\geq 1$ on $E.$
\end{definition}
Let $h:[0,\infty)\rightarrow [0,\infty)$ be a non-decreasing function such that $\lim_{t\rightarrow 0+}h(t)=h(0)=0.$ For $0<\delta\leq\infty,$ and $E\subset X,$ we define \textit{generalized Hausdorff $h$-measure} by setting
\begin{equation*}
H^h(E)=\limsup_{\delta\rightarrow 0}H_{\delta}^{h}(E),
\end{equation*}
where
\begin{equation*}
H_{\delta}^{h}(E)=\inf\sum_i h(\diam(B_i)),
\end{equation*}
where the infimum is taken over all collections of balls $\{B_i\}_{i=1}^{\infty}$ such that $\diam(B_i)\leq\delta$ and $E\subset\bigcup_{i=1}^{\infty} B_i.$ In particular, if $h(t)=t^\alpha$ with some $\alpha>0,$ then $H^h$ is the usual \textit{$\alpha$-dimensional Hausdorff measure}, denoted also by $H^{\alpha}.$ See \cite{Rog98} for more information on the generalized Hausdorff measure. Recall that the \textit{Hausdorff $h$-content} of a set $E$ in a metric space is the number
$$\mathcal{H}_{\infty}^h(E)=\inf\sum_ih(\diam(B_i)),$$
where the infimum is taken over all countable covers of the set $E$ by balls $B_i.$ Thus the $h$-content of $E$ is less than, or equal to, the Hausdorff $h$-measure of $E,$ and it is never infinite for $E$ bounded. However, the $h$-content of set is zero if and only if its Hausdorff $h$-measure is zero.\\
\indent For the convenience of reader we state here a fundamental covering lemma (for a proof see \cite[2.8.4-6]{Fed69} or \cite[Theorem 1.3.1]{Zie89}).
\begin{lemma}[5B-covering lemma]\label{cover}
Every family $\mathcal{F}$ of balls of uniformly bounded diameter in a metric space $X$ contains a pairwise disjoint subfamily $\mathcal{G}$ such that for every $B\in\mathcal{F}$ there exists $B'\in\mathcal{G}$ with $B\cap B'\neq\emptyset$ and $\diam(B)<2\diam(B').$ In particular, we have that
$$\bigcup_{B\in\mathcal{F}}B\subset\bigcup_{B\in\mathcal{G}}5B.$$
\end{lemma}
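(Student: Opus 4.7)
The plan is to reduce the claim to a Frostman-type dual estimate, then establish it using the pointwise consequences of the chain condition together with the $Q$-Poincar\'e inequality.

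By the Frostman lemma in doubling metric spaces, $H^h(E)=0$ (equivalently $\mathcal{H}_\infty^h(E)=0$) will follow once we show $\nu(E)=0$ for every finite Borel measure $\nu$ supported on $E$ with $\nu(B(y,r))\leq h(r)$ for all $y\in X$ and $r>0$. Given $\eta>0$, the hypothesis $\capacity_Q(E,B(x_0,2R))=0$ provides a continuous $u$ with compact support in $B(x_0,2R)$, $u\geq 1$ on $E$, and an upper gradient $g$ with $\|g\|_{L^Q(X)}<\eta$. Truncating, we may assume $0\leq u\leq 1$ and $u=1$ on $E$.

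For each $x\in E$, use the chain condition to produce a sequence of balls $B_k(x)$ with $x\in B_k$, $\rad(B_k)\simeq r_0 2^{-k}$ (for a starting radius $r_0\simeq R$), consecutive balls lying in a common Poincar\'e ball, and a common reference ball at the top scale. Telescoping with the $Q$-Poincar\'e inequality, and handling the top mean value $u_{B_0}$ via doubling together with the support bound on $u$, gives the pointwise estimate
\[
1\leq C\sum_{k=0}^\infty r_k \left(\dashint_{\lambda B_k(x)} g^Q\,d\mu\right)^{1/Q}.
\]
By $Q$-doubling each summand is bounded uniformly by $CR\eta\,\mu(B(x_0,R))^{-1/Q}$, so for a cut-off $K_*\simeq 1/\eta$ the first $K_*$ terms contribute at most $1/(4C)$, leaving
\[
\frac{1}{4C}\leq \sum_{k\geq K_*} r_k \left(\dashint_{\lambda B_k(x)} g^Q\,d\mu\right)^{1/Q}, \qquad x\in E.
\]

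I would then integrate this pointwise bound against $\nu$, apply Fubini, and combine the Frostman upper bound $\nu(B(y,\lambda r_k))\leq h(\lambda r_k)\simeq k^{1-Q-\epsilon}$ with the $Q$-doubling lower bound on $\mu(B(y,\lambda r_k))$ at each scale. A carefully weighted H\"older inequality in $k$---whose convergence is secured by the extra $\epsilon>0$ of safety margin in the gauge $h$---then yields $\nu(E)\leq C_\epsilon\eta^\delta$ for some $\delta=\delta(\epsilon,Q)>0$. Sending $\eta\to 0$ forces $\nu(E)=0$, which is the required Frostman dual estimate.

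Main obstacle. The main technical point is the final $k$-sum estimate: naive H\"older choices such as weights $c_k=k^\beta$ give convergence only for $\epsilon$ above a fixed threshold, not for arbitrary $\epsilon>0$. The chain condition must therefore be used in a sharper way, effectively simulating the gain obtained from a self-improved $p$-Poincar\'e inequality for some $p<Q$ in the sense of \cite{KZ08}, which is the difference between the gauge $\log^{1-Q-\epsilon}$ obtained here and the weaker $\log^{-Q-\epsilon}$ available from a direct $Q$-Poincar\'e argument as in \cite{BO05}. A secondary bookkeeping issue is controlling the top-scale mean value $u_{B_0}$ when $X$ is bounded, so that $r_0$ cannot be enlarged at will; this must be absorbed using that $u$ has small $L^1$-average on $B_0$ relative to its support.
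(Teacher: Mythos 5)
You have proved the wrong statement. The lemma under consideration is the $5B$-covering lemma: a purely metric, combinatorial assertion that any family $\mathcal{F}$ of balls of uniformly bounded diameter contains a pairwise disjoint subfamily $\mathcal{G}$ such that every $B\in\mathcal{F}$ meets some $B'\in\mathcal{G}$ with $\diam(B)<2\diam(B')$, whence $\bigcup_{\mathcal{F}}B\subset\bigcup_{\mathcal{G}}5B'$. Your proposal instead sketches an argument for Theorem \ref{maintheorem} (sets of $Q$-capacity zero have $H^h$-measure zero): it invokes Frostman measures, capacity test functions, the chain condition, and the $Q$-Poincar\'e inequality. None of these objects appear in, or are relevant to, the covering lemma, which holds in an arbitrary metric space with no measure, no connectivity hypothesis, and no Sobolev structure whatsoever. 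The paper itself does not reprove this lemma but cites \cite{Fed69} and \cite{Zie89}; it then \emph{uses} the lemma at the end of the proof of Theorem \ref{maintheorem}, which may be the source of the confusion.

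The actual proof is elementary and entirely different in character. Let $D=\sup_{B\in\mathcal{F}}\diam(B)<\infty$ and stratify $\mathcal{F}$ into the classes
$$\mathcal{F}_j=\left\{B\in\mathcal{F} : \frac{D}{2^{j+1}}<\diam(B)\leq\frac{D}{2^{j}}\right\},\qquad j=0,1,2,\ldots$$
Choose $\mathcal{G}_0\subset\mathcal{F}_0$ to be a maximal pairwise disjoint subfamily (Zorn's lemma), and inductively let $\mathcal{G}_j\subset\mathcal{F}_j$ be a maximal pairwise disjoint subfamily of those balls in $\mathcal{F}_j$ that are disjoint from every ball already chosen in $\mathcal{G}_0\cup\cdots\cup\mathcal{G}_{j-1}$. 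Set $\mathcal{G}=\bigcup_j\mathcal{G}_j$; it is pairwise disjoint by construction. Given $B\in\mathcal{F}_j$, maximality forces $B$ to intersect some $B'\in\mathcal{G}_0\cup\cdots\cup\mathcal{G}_j$, and then $\diam(B)\leq D2^{-j}<2\diam(B')$ since $\diam(B')>D2^{-j-1}$. Finally, if $B=B(x,r)$ meets $B'=B(x',r')$ and $\diam(B)<2\diam(B')$, then for any $y\in B$ the triangle inequality gives $d(y,x')\leq\diam(B)+\diam(B')+r'<5r'$, so $B\subset 5B'$. Your analytic machinery cannot be repaired into a proof of this statement; the statement simply is not analytic.
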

We mention a technical lemma from \cite{KK} and we give a simple proof here.
\begin{lemma}\label{3}
Suppose $\{a_j\}_{j=0}^{\infty}$ is a sequence of non-negative real numbers such that $\sum_{j\geq 0}a_j<\infty$. Then
\begin{equation*}
\sum_{j\geq 0}\frac{a_j}{\left(\sum_{i\geq j}a_i\right)^{1-\delta}}\leq\frac{1}{\delta}\left(\sum_{j\geq 0}a_j\right)^{\delta}<\infty\quad\text{for any}\quad 0<\delta<1.
\end{equation*}
\end{lemma}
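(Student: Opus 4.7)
The plan is a discrete-to-integral comparison. First I would introduce the tail sums
$$S_j := \sum_{i \ge j} a_i,$$
so that $\{S_j\}$ is non-negative and non-increasing, $a_j = S_j - S_{j+1}$, $S_0 = \sum_{j\ge 0} a_j$, and $S_j \to 0$ as $j\to\infty$ because the series converges. The inequality to prove then becomes
$$\sum_{j\ge 0} \frac{S_j - S_{j+1}}{S_j^{1-\delta}} \le \frac{S_0^{\delta}}{\delta}.$$

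Next I would exploit monotonicity of $s \mapsto s^{1-\delta}$ on $[0,\infty)$ (here $1-\delta \ge 0$ since $\delta \in (0,1)$). For each $j$ and every $s \in [S_{j+1}, S_j]$ we have $s^{1-\delta} \le S_j^{1-\delta}$, hence
$$\frac{a_j}{S_j^{1-\delta}} = \int_{S_{j+1}}^{S_j} \frac{ds}{S_j^{1-\delta}} \le \int_{S_{j+1}}^{S_j} \frac{ds}{s^{1-\delta}}.$$
The intervals $[S_{j+1}, S_j]$ are pairwise disjoint (with those corresponding to $a_j = 0$ collapsing to a point), and since $S_j \to 0$ their union covers $(0, S_0]$. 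Summing in $j$ therefore telescopes the right-hand side to the single integral
$$\int_0^{S_0} \frac{ds}{s^{1-\delta}} = \frac{S_0^{\delta}}{\delta},$$
which is exactly the claimed bound. Finiteness of $(\sum_j a_j)^{\delta}$ is immediate from the standing hypothesis.

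I do not anticipate any real obstacle. The only point requiring a word of explanation is the convention $a_j/S_j^{1-\delta} = 0$ when $a_j = 0$ (even in the degenerate case $S_j = 0$), which is consistent with the vanishing of the corresponding integral $\int_{S_{j+1}}^{S_j} s^{\delta-1}\,ds$. An equally valid alternative, should the integral argument seem informal, is to derive the same estimate directly from the inequality
$$\frac{S_j - S_{j+1}}{S_j^{1-\delta}} \le \frac{S_j^{\delta} - S_{j+1}^{\delta}}{\delta},$$
which follows from the concavity of $s \mapsto s^{\delta}$ (mean-value theorem applied on $[S_{j+1}, S_j]$, using $1-\delta \ge 0$), and then telescoping $\sum_{j\ge 0}(S_j^{\delta} - S_{j+1}^{\delta}) = S_0^{\delta}$.
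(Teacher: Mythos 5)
Your argument is correct and is essentially the paper's own proof: the paper introduces the step function $u(t)=\sum_{j}a_j\chi_{[j,j+1)}(t)$ and its tail integral $v(x)=\int_x^{\infty}u(t)\,dt$ (so that $v(j)=S_j$ and $v$ is decreasing and piecewise linear), then bounds the sum by $\int_0^{\infty}(-v'(x))\,v(x)^{\delta-1}\,dx=\tfrac{1}{\delta}S_0^{\delta}$, which is exactly your comparison $\sum_j a_j S_j^{\delta-1}\le\int_0^{S_0}s^{\delta-1}\,ds$ after the change of variables $s=v(x)$. Your concavity/telescoping alternative is the same estimate in purely discrete form, and either version suffices.
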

\begin{proof}
Define $$u(t)=\sum\limits_{j\geq 0}a_j\chi_{\left[j, j+1\right)}(t)$$ for $t\geq 0$ and $v(x)=\int_x^{\infty}u(t)\,dt$ for $x\geq 0.$ Then $v$ is a Lipschitz function and
$$v(x)=\sum\limits_{j\geq 0}a_jv_j(x),$$
where
\begin{equation*}
v_j(x)=
    \begin{cases}
    1& \text{if $x<j$},\\
    j+1-x& \text{if $j\leq x<j+1$},\\
    0& \text{if $x\geq j+1$}.
    \end{cases}
\end{equation*}
Then we have the required estimate
\begin{equation*}
\sum\limits_{j\geq 0}\frac{a_j}{\left(\sum_{i\geq j}a_i\right)^{1-\delta}}\leq\int_0^{\infty}\frac{-v^{\prime}(x)dx}{v(x)^{1-\delta}}=\frac{1}{\delta}\left(\sum\limits_{j\geq 0}a_j\right)^{\delta}<\infty.
\end{equation*}
\end{proof}
\section{Proof of Theorem \ref{maintheorem}}  
Before we go into the proof of Theorem \ref{maintheorem}, let us recall a definition of a \textit{chain condition} from \cite{KK}, a version of which is already introduced in \cite{HK00}.
\begin{definition}\label{chain}
We say that a space $X$ satisfies a \textit{chain condition} if for every $\lambda\geq 1$ there are constants $M\geq 1,$ $0<m\leq 1$ such that for each $x\in X$ and all $0<r<\diam(X)/8$ there is a sequence of balls $B_0,B_1,B_2,\ldots$ with\\
1. $B_0\subset X\setminus B(x,r)$,\\
2. $M^{-1}\diam(B_i)\leq \dist(x,B_i)\leq M\diam(B_i)$,\\
3. $\dist(x,B_i)\leq Mr2^{-mi}$,\\
4. there is a ball $D_i\subset B_i\cap B_{i+1}$, such that $B_i\cup B_{i+1}\subset MD_i$,\\
for all $i\in\mathbb{N}\cup\{0\}$ and\\
5. no point of $X$ belongs to more than $M$ balls $\lambda B_i$.
\end{definition}
\indent The sequence $B_i$ will be called a \textit{chain associated with} $x,r$.\\

The existence of a doubling measure on $X$ does not guarantee a chain condition. In fact, such a space can be badly disconnected, whereas a space with a chain condition cannot have \lq\lq large gaps\rq\rq . For example, the standard $1/3$-Cantor set satisfies a chain condition only for $\lambda<2$. On the other hand, geodesic and many other spaces satisfy our chain condition, see \cite{KK}.
\indent We recall a lemma from \cite{KK} and we omit the proof here.
\begin{lemma}\label{comparision}
Suppose that $X$ satisfies a \textit{chain condition} and let the sequence $B_i$ be a \textit{chain associated with} $x,R_1,R_2$ for $x\in X$ and $0<R_1<R_2<\diam(X)/4$. Then we can find balls $B_{i_{R_2}},B_{i_{R_2}+1},\ldots,B_{i_{R_1}}$ from the above collection such that
\begin{eqnarray}
\frac{R_2}{M(1+M)^2} & \leq \diam(B_{i_{R_2}}) & \leq MR_2,\\
\frac{R_1}{M(1+M)^2} & \leq \diam(B_{i_{R_1}}) & \leq MR_1
\end{eqnarray}
hold and $B_{i_{R_2}} \subset B(x,R_2),$ $B_{i_{R_1}}\subset B(x,R_1)$ and also the balls $B_{i_{R_2}},B_{i_{R_2}+1},\ldots,B_{i_{R_1}}$ form a chain.
\end{lemma}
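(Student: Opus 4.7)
The plan is to choose, for each target scale $R \in \{R_1, R_2\}$, the index $i_R$ to be the first index in the chain at which $\diam(B_i)$ drops below $R/(M+1)$; the containment $B_{i_R} \subset B(x,R)$ will then be automatic from the smallness of $\diam(B_{i_R})$, while the minimality of $i_R$ combined with a uniform comparability of consecutive diameters in the chain will produce the lower bound.

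First I would check that $\diam(B_i) \to 0$ and that each $i_R$ exists with $i_R \geq 1$. Writing $r$ for the radius with which the chain is associated (which we may take at least $R_2$), Definition \ref{chain}(2)--(3) gives $\diam(B_i) \leq M\dist(x, B_i) \leq M^2 r\, 2^{-mi} \to 0$, so
\[
i_R := \min\{i \geq 0 : \diam(B_i) \leq R/(M+1)\}
\]
is well-defined. On the other hand, $B_0 \subset X \setminus B(x, r)$ forces $\dist(x, B_0) \geq r$, and Definition \ref{chain}(2) then yields $\diam(B_0) \geq \dist(x, B_0)/M \geq r/M \geq R/M > R/(M+1)$ for $R \in \{R_1, R_2\}$, so $i_R \geq 1$.

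The decisive input is the following comparability of consecutive diameters: for every $i \geq 1$,
\[
\diam(B_{i-1}) \leq M(M+1)\,\diam(B_i).
\]
To prove it, use Definition \ref{chain}(4) to pick a point $p \in D_{i-1} \subset B_{i-1} \cap B_i$. Since $p \in B_i$, $d(x, p) \leq \dist(x, B_i) + \diam(B_i)$; since $p \in B_{i-1}$, $\dist(x, B_{i-1}) \leq d(x, p)$. Chaining these and applying Definition \ref{chain}(2) twice,
\[
\diam(B_{i-1}) \leq M\dist(x, B_{i-1}) \leq M\bigl(\dist(x, B_i) + \diam(B_i)\bigr) \leq M(M+1)\diam(B_i).
\]
Applying this at $i = i_R$ together with $\diam(B_{i_R - 1}) > R/(M+1)$ (from the minimality of $i_R$) gives
\[
\diam(B_{i_R}) \geq \frac{\diam(B_{i_R - 1})}{M(M+1)} > \frac{R}{M(1+M)^2}.
\]
The upper bound $\diam(B_{i_R}) \leq R/(M+1) \leq MR$ is immediate from the definition of $i_R$, and the containment follows by estimating, for $y \in B_{i_R}$, $d(x, y) \leq \dist(x, B_{i_R}) + \diam(B_{i_R}) \leq (M+1)\diam(B_{i_R}) \leq R$ (with the usual strictness adjustment for open balls).

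Finally, since $R_1 < R_2$, the threshold defining $i_{R_1}$ is smaller, so $i_{R_1} \geq i_{R_2}$, and the subfamily $B_{i_{R_2}}, B_{i_{R_2}+1}, \ldots, B_{i_{R_1}}$ is a consecutive tail of the original chain, hence still satisfies the chain relations of Definition \ref{chain}(4)--(5). The main obstacle I expect is the comparability estimate for consecutive diameters, which uses the overlap ball $D_{i-1}$ from (4) crucially; the factor $(1+M)^2$ in the denominator of the statement arises precisely as the product of the loss $(M+1)$ in the containment threshold and the loss $M(M+1)$ incurred in passing from $B_{i_R-1}$ to $B_{i_R}$.
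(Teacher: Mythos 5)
The paper never proves this lemma --- it is explicitly recalled from \cite{KK} ``and we omit the proof here'' --- so there is no in-paper argument to compare yours against; judged on its own terms, your proof is correct, and the fact that it reproduces the constant $M(1+M)^2$ exactly suggests it is essentially the intended argument. Your two ingredients do all the work: (i) the comparability $\diam(B_{i-1})\leq M(M+1)\diam(B_i)$ of consecutive diameters, obtained by picking a point of $B_{i-1}\cap B_i$ (nonempty by condition (4) of Definition \ref{chain}) and applying condition (2) to both balls; and (ii) the first-index-below-threshold selection $i_R=\min\{i:\diam(B_i)\leq R/(M+1)\}$, whose minimality converts (i) into the stated lower bound, while the threshold itself gives the upper bound and the containment $B_{i_R}\subset B(x,R)$. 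Two caveats are worth recording. First, your containment estimate only yields $d(x,y)\leq R$, i.e.\ containment in the \emph{closed} ball; as you note, this is repaired by defining $i_R$ with a strict inequality $\diam(B_i)<R/(M+1)$, which leaves the rest of the argument (including the lower bound, now with $\diam(B_{i_R-1})\geq R/(M+1)$) unchanged. Second, your proof that $i_R\geq 1$ adds the hypothesis that the radius $r$ of the ambient chain satisfies $r\geq R_2$; this is a reading you had to supply, since the paper's phrase ``chain associated with $x,R_1,R_2$'' is never defined (Definition \ref{chain} only associates chains to a single radius $r<\diam(X)/8$, while the lemma allows $R_2<\diam(X)/4$, so your reading cannot cover every $R_2$ the statement permits). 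That mismatch is an imprecision in the paper's statement rather than an error of yours; in fact your argument survives under the weaker requirement $r\geq R_2/(1+M)^2$, because if $i_{R_2}=0$ then conditions (1) and (2) give $\diam(B_0)\geq \dist(x,B_0)/M\geq r/M\geq R_2/(M(1+M)^2)$ directly, and the containment argument is unaffected.
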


\begin{proof}[\textbf{Proof of Theorem \ref{maintheorem}}] For notational simplicity, we assume $R=1/8.$ Let $u$ be a continuous function with compact support in $B(x_0,1/4)$ and $u\geq 1$ on $E.$ Let $g$ be an upper gradient of $u.$ We construct
\begin{equation}\label{estimate}
E_{\epsilon,M}=\left\{x\in E : \exists ~\text{some}~ r_x<10 ~\text{so that}~ \int_{B(x,r_x)}g^Q\,d\mu\geq M\log^{1-Q-\epsilon}\left(\frac{10}{r_x}\right)\right\},
\end{equation}
$M$ to be chosen later.\\
\indent Let $x\in E\setminus E_{\epsilon,M}.$ 
Let $k\in\mathbb{N}.$ Then we apply Lemma \ref{comparision} for $R_1=2^{-k},$ $R_2=2^{-1}$ to get a chain of balls $B_1,B_2,\ldots,B_{i_k}.$ Using the doubling property, Poincar\'e inequality and Lemma \ref{comparision}, we obtain
\begin{eqnarray*}
\vert u_{B_{i_k}}-u_{B(x,2^{-k})}\vert &\leq & \dashint_{B_{i_k}}\vert u-u_{B(x,2^{-k})}\vert \,d\mu\\
&\leq & c\dashint_{u_{B(x,2^{-k})}}\vert u-u_{B(x,2^{-k})}\vert \,d\mu\\
&\leq & c\left(\int_{B(x,2^{-k})}g^Q\,d\mu\right)^{\frac{1}{Q}}\rightarrow 0 ~\text{as}~ k\rightarrow\infty
\end{eqnarray*}
and hence $u_{B_{i_k}}\geq 2/3$ for large $k,$ by the continuity of $u.$ We assume that $u_{B_1}\leq 1/3,$ as we can always do it by increasing the radius $R_2.$\\
Let $\tilde{\epsilon}>0,$ which is to be chosen later.  We use a telescopic argument for the balls $B_1,B_2,\ldots,B_{i_k}$ and also use chain conditions, relative lower volume decay \eqref{1} and Poincar\'e inequality \eqref{PI} to obtain
\begin{eqnarray*}
\frac{1}{3}\leq\vert u_{B_{i_k}}-u_{B_1}\vert & \leq & \sum_{n=1}^{i_k-1}\vert u_{B_n}-u_{B_{n+1}}\vert\\
&\leq & \sum_{n=1}^{i_k-1}\left(\vert u_{B_n}-u_{D_n}\vert+\vert u_{B_{n+1}}-u_{D_n}\vert\right)\\
&\leq & \sum_{n=1}^{i_k}\left(\dashint_{D_n}\vert u-u_{B_n}\vert\, d\mu +\dashint_{D_n}\vert u-u_{B_{n+1}}\vert\, d\mu\right)\\
& \leq & c\sum_{n=1}^{i_k}\dashint_{B_n}\vert u-u_{B_n}\vert\, d\mu\\
& \leq & c\sum_{n=1}^{i_k}\diam(B_n)\left(\dashint_{\lambda B_n}g^Q\, d\mu\right)^{\frac{1}{Q}}\\
& \leq & c\sum_{n\geq 1}\left(\frac{\diam(B_n)^Q}{\mu(B_n)}\int_{\lambda B_n}g^Q\, d\mu\right)^{\frac{1}{Q}}n^{\frac{Q-1+\tilde{\epsilon}}{Q}}n^{-\frac{Q-1+\tilde{\epsilon}}{Q}}\\
& \leq & c\left(\sum_{n\geq 1}\frac{\diam(B_n)^Q}{\mu(B_n)}n^{Q-1+\tilde{\epsilon}}\int_{\lambda B_n}g^Q\, d\mu\right)^{\frac{1}{Q}} \left(\sum_{n\geq 1}n^{-\frac{Q-1+\tilde{\epsilon}}{Q-1}}\right)^{\frac{Q-1}{Q}}\\
& \leq & \frac{c}{\mu(B(x,10))}\left(\sum_{n\geq 1}n^{Q-1+\tilde{\epsilon}}\int_{\lambda B_n}g^Q\, d\mu\right)^{\frac{1}{Q}}.
\end{eqnarray*}
Since $x\in E\setminus E_{\epsilon,M},$ we have
\begin{equation}\label{4}
\int_{B(x,r_x)}g^Q\, d\mu\leq M\log^{1-Q-\epsilon}\left(\frac{10}{r_x}\right)
\end{equation}
for all $r_x<10.$ Hence we get
\begin{equation}
\sum_{m\geq n}\int_{\lambda B_m}g^Q\,d\mu\leq Mn^{1-Q-\epsilon}
\end{equation}
for all $n\geq 1.$ Then we choose $\tilde{\epsilon}=\epsilon-\delta(Q-1-\epsilon)$ for some $0<\delta<1$ (we can choose $\delta$ as small as we want to make $\tilde{\epsilon}$ positive) to obtain
\begin{equation*}
1\leq\frac{cM^{\frac{1-\delta}{Q}}}{\mu(B(x,10))}\left(\sum_{n\geq 1}\frac{\int_{\lambda B_n}g^Q\, d\mu}{\left(\sum_{m\geq n}\int_{\lambda B_m}g^Q\right)^{1-\delta}}\right)^{\frac{1}{Q}}.
\end{equation*}
Finally, we use Lemma \ref{3} and \eqref{4} to get
\begin{eqnarray*}
1 &\leq & \frac{cM^{\frac{1-\delta}{Q}}}{\delta\mu(B(x,10))}\left(\sum_{n\geq 1}\int_{\lambda B_n}g^Q\, d\mu\right)^{\frac{\delta}{Q}}\\
&\leq & \frac{cM^{\frac{1}{Q}}}{\delta\mu(B(x,10))}.
\end{eqnarray*}
If we choose $M<\delta^Q/c^Q,$ where $0<\delta<\epsilon/(Q-1+\epsilon)$, then we get contradiction in the above inequality to conclude that $E\setminus E_{\epsilon,M(\epsilon,Q,c)}=\emptyset.$ In other words, for every $x\in E,$ there exists $r_x<10$ such that
\begin{eqnarray*}
\int_{B(x,r_x)}g^Q\,d\mu\geq M(\epsilon,Q,c)\log^{1-Q-\epsilon}\left(\frac{10}{r_x}\right).
\end{eqnarray*}
By $5B$-covering lemma, pick up a collection of disjoint balls $B_i=B(x_i,r_i)$ such that $E\subset \cup_i 5B_i.$ Then
\begin{eqnarray*}
\int_{X}g^Q\,d\mu\geq\sum_i\int_{B_i}g^Q\,d\mu &\geq & M(\epsilon,Q,c)\sum_i\log^{1-Q-\epsilon}\left(\frac{1}{r_i}\right)\\
&\geq & M(\epsilon,Q,c)\log^{1-Q-\epsilon}\left(\frac{1}{\diam(E)}\right),
\end{eqnarray*}
hence $\capacity_Q(E,B(x_0,2R))\geq M(\epsilon,Q,c)\mathcal{H}^h_{\infty}(E).$
\end{proof}

\def\bibname{References}
\bibliography{capacity}
\bibliographystyle{alpha}
\end{document}